\newcommand{\R}{\mathbb{R}}
\newcommand{\Z}{\mathbb{Z}}
\newcommand{\bes}{\dot B^{-N/p^*,\infty,\infty}(\R^N)}
\newcommand{\sob}{\dot H^{1,p}(\R^N)}
\newcommand{\sobemb}{\dot H^{1,p}(\R^N)\hookrightarrow L^{p^*}(\R^N)}
\newcommand{\leb}{L^{p^*}(\R^N)}
\newtheorem{theorem}{Theorem}[section]
\newtheorem{lemma}[theorem]{Lemma}
\theoremstyle{definition}
\newtheorem{definition}[theorem]{Definition}
\theoremstyle{remark}
\numberwithin{equation}{section}
\begin{document}

\title{Four proofs of cocompacness for Sobolev embeddings}


\author{Cyril Tintarev}
\address{Uppsala University}
\curraddr{}
\email{tintarev@math.uu.se}
\thanks{}

\subjclass[2010]{Primary  46E35,  46B50, 46B99. Secondary 46E15, 46B20, 47N20}

\date{}

\begin{abstract}
Cocompactness is a property of embeddings between two Banach spaces, similar to but weaker than compactness, defined relative to some non-compact group of bijective isometries. In presence of a cocompact embedding, bounded sequences (in the domain space) have subsequences that can be represented as a sum of a well-structured``bubble decomposition'' (or {\em defect of compactness}) plus a remainder vanishing in the target space.
This note is an exposition of different proofs of cocompactness for Sobolev-type embeddings, which employ methods of classical PDE, potential theory, and harmonic analysis.
\end{abstract}

\maketitle
\section{Introduction}
Cocompactness is a property of continuous embeddings between two Banach spaces, defined as follows.
\begin{definition}\label{def:coco} 
Let $X$ be a reflexive Banach space continuously embedded into a Banach space $Y$. The embedding $X\hookrightarrow Y$ is called cocompact relative to a  group $G$ of bijective isometries on $X$ if every sequence $(x_k)$, such that $g_kx_k\rightharpoonup 0$ for any choice of $(g_k)\subset G$, converges to zero in $Y$.  
\end{definition}
In particular, a compact embedding is cocompact relative to the trivial group $\{I\}$ (and therefore to any other group of isometries), but the converse is generally false. In this note we study embedding of the Sobolev space $\sob$, defined as the completion of $C_0^\infty(\R^N)$ with respect to the gradient norm $(\int|\nabla u|^pdx)^\frac{1}{p}$, $1<p<N$, into the Lebesgue space $L^\frac{Np}{N-p}(\R^N)$. This embedding is not compact, but it is cocompact relative to the rescaling group (see \eqref{cco} below). 

An elementary example of a cocompact embedding that is not compact, due to Jaffard \cite{Jaffard}, is the embedding of $\ell^\infty(\mathbb Z)$ into itself, which is cocompact relative to the group of shifts $\{g_n:\;\{x_j\}\mapsto \{x_{j-n}\}, n\in\Z\}$. Indeed, assume that $g_{n_k}x^{(k)}\rightharpoonup 0$ for any choice of $(n_k)$. Choose a component $x^{k}_{j_k}$ of $x^{(k)}$ so that $\|x^{(k)}\|_\infty\le 2|x^{k}_{j_k}|$. Then, applying the sequence $g_{j_k}$ we have $\|x^{(k)}\|_\infty\le 2|\langle g_{j_k}x^{(k)},e_0\rangle|\to 0$. In fact, this type of argument can be used to prove cocompactness of any embedding, where the spaces involved have an unconditional wavelet basis associated with the group, as it is indeed done in \cite{Jaffard} for fractional Sobolev spaces, and in \cite{BCK} for Besov and Triebel-Lizorkin spaces.

 Cocompactness is a tool widely used (often without being referred to by any name, or else as ``vanishing lemma`` or ``inverse embedding``) for proving convergence of functional sequences, in particular in calculus of variations. Cocompactness property implicitly underlies the concentration compactness method of P.-L. Lions, but it is important to stress that the latter was developed for particular function spaces, while cocompactness argument is functional-analytic, and its applications extend beyond the concentration phenomena (see \cite{survey}). 
 The earliest cocompactness result known to the author is the proof by E.~Lieb \cite{Lieb} of cocompactness of embedding of the inhomogenous Sobolev space $H^{1,p}$ into $L^{q}$, $q\in(p,p^*)$, relative to the group of shifts $u\mapsto u(\cdot-y)\;y\in\R^N$ (which is an easy consequence of the cocompactness result discussed here). We refer the reader to a survey  \cite{survey} of known cocompact embeddings for function spaces. 
 The purpose of this note is to complement that survey of results with four different proofs of cocompactness for Sobolev embeddings. We generalize the second and the fourth proofs found in literature for the  case $p=2$ to general $p$, and  the fourth proof is shortened in comparison to the original version by means of referring to a known inequality. The third and the fourth proofs can be easily extended ti embeddings of Besov and Tribel-Lizorkin spaces, as it is done in, respectively \cite{BCK} and (to our best knowledge) \cite{Cwikel}. 

Cocompactness property plays crucial role in describing the ''blow-up`` structure of sequences. In presence of a $G$-cocompact embedding $X\hookrightarrow Y$, any bounded sequence in $X$, under suitable additional conditions, has a subsequence consisting of asymptotically decoupled ''bubbles`` of the form $g_kw$ with $g_k\in G$ and $w\in X$, plus a remainder that vanishes in the norm of $Y$. Existence of such decomposition was first proved by Solimini \cite{Solimini} for the Sobolev embedding $\sobemb$, generalized by several subsequent papers to other spaces of Sobolev type, to more general groups, as well as to Strichartz spaces (see survey \cite{survey}), extended to general Hilbert spaces in \cite{SchinTin}, and to general Banach spaces in \cite{SoliTi}. The latter work, however, uses a related, but different, property of Delta-cocompactness, which can be formulated by replacing the weak convergence in the statement of Definition~1.1 with Delta-convergence of  T.~C.~Lim \cite{Lim}.   
\begin{definition}
 Let $X$ be a metric space. One says  a sequence $(x_n)\subset X$ has  a Delta-limit $x$ if for any $y\in X$ there is a sequence of real numbers $\alpha_n\to 0$, such that 
 \[
 d(x_n,y) \ge d(x_n,x)-\alpha_n.
 \]
\end{definition}
Delta-convergence and weak convergence coincide in Hilbert spaces, but not in general Banach spaces. Delta-convergence is, however, dependent on a norm, and in separable spaces one can always find an equivalent norm, relative to which Delta-convergence coincides with weak convergence (van Dulst, \cite{vanDulst}). Furthermore, for Besov and Triebel-Lizorkin (including Sobolev) spaces, such equivalent norm exists while it also remains invariant with respect to the rescaling group, defined via the Littlewood-Paley decomposition (\cite{Triebel}, Definition 2 in Chapter 5), see Cwikel, \cite{Cwikel}. Thus, in the context of spaces of Sobolev type, there is no need to consider Delta-cocompactness as a property distinct from cocompactness. 

We do not include here the case $p=1$, because the natural extension of the notion of cocompactness to a non-reflexive space would invoke the weak-star convergence, which is not defined on $\dot H^{1,1}$. Instead one may regard $\dot H^{1,1}(\R^N)$ as a closed isometric subspace of the space of functions of bounded variation, and prove cocompactness of the embedding $BV(\R^N)\hookrightarrow L^{1^*}(\R^N)$ for $N\ge 2$, as it is done in \cite{AT-BV}. The proof here is an adaptation of the second proof in this note that takes into account a different chain rule for functions of bounded variation.  

A larger collection of cocompact embeddings can be produced by restricting homogeneous functional spaces to their subspaces, in particular to inhomogenous Sobolev spaces. In presence of additional symmetries, such as radial symmetry, cocompactness may even yield compact embedding, such as in the case of subspaces of radial functions. For details on derived cocompact embeddings we refer to \cite{survey}.

Let $\lambda\in\R$ and let 
\begin{equation} 
 \label{cco}
G_\lambda=\{g_{j,y}:\;u(x)\mapsto 2^{j\lambda}u(2^j(x-y)), j\in\Z, y\in\R^N\}
\end{equation} 
 with $\lambda={\frac{N-p}{p}}=N/p^*$. On either of  $\sob$ and $\leb$, $1<p<N$, this deffines a group of (bijective) linear isometries, often called the rescalings group. In what follows the index $\lambda$ will be usually omitted.

In this note we give four different proofs of the following statement.
\begin{theorem}
\label{thm1}
The Sobolev embedding $\sobemb$, $1<p<N$, is cocompact relative to the rescsaling group $G_{\frac{N-p}{p}}$.
\end{theorem}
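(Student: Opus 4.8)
The plan is to give the harmonic-analytic proof, which rests on a \emph{refined Sobolev inequality}: there are a constant $C=C(N,p)$ and an exponent $\theta=\theta(N,p)\in(0,1)$ with
\[
\|u\|_{\leb}\le C\,\|\nabla u\|_{L^p(\R^N)}^{\theta}\,\|u\|_{\bes}^{1-\theta}\qquad\text{for all }u\in\sob .
\]
All three factors carry the same homogeneity under $G_{\frac{N-p}{p}}$, so the inequality is consistent with the rescaling action; its proof is a Littlewood--Paley estimate, and I would simply cite it (this is the ``known inequality'' referred to in the introduction). Granting it, Theorem~\ref{thm1} reduces to the following claim: if $(u_k)\subset\sob$ is bounded and $g_ku_k\rightharpoonup 0$ in $\sob$ for every choice of $(g_k)\subset G$, then $\|u_k\|_{\bes}\to 0$. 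Once the claim is in hand, boundedness of $\|\nabla u_k\|_{L^p(\R^N)}$ together with $\|u_k\|_{\bes}\to 0$ and $\theta<1$ forces $\|u_k\|_{\leb}\to 0$, which is exactly cocompactness.

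To prove the claim I would argue by contradiction, using the Littlewood--Paley characterization of the Besov norm: fixing $\varphi\in C_0^\infty(\R^N)$ with $\widehat\varphi$ supported in a fixed annulus and setting $\varphi_j(x)=2^{jN}\varphi(2^jx)$, one has
\[
\|u\|_{\bes}\ \approx\ \sup_{j\in\Z,\ y\in\R^N}2^{-jN/p^*}\,\bigl|(\varphi_j*u)(y)\bigr| .
\]
If $\|u_k\|_{\bes}\not\to 0$, then after passing to a subsequence there are $\delta>0$, integers $j_k$ and points $y_k$ with $2^{-j_kN/p^*}|(\varphi_{j_k}*u_k)(y_k)|\ge\delta$ (using that each $\varphi_{j_k}*u_k$ is continuous, so its sup norm is essentially attained). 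The change of variables $x=y_k+2^{-j_k}z$ rewrites this as $|\langle g_ku_k,\psi\rangle|\ge\delta$, where $\psi(z)=\varphi(-z)$ and $g_k\in G$ is the rescaling $g_ku(z)=2^{-j_kN/p^*}u(2^{-j_k}z+y_k)=g_{-j_k,-2^{j_k}y_k}u$. Since $\psi$ is a Schwartz function all of whose moments vanish, $\psi$ represents an element of the dual $(\sob)^*\cong\dot H^{-1,p'}(\R^N)$; hence the hypothesis $g_ku_k\rightharpoonup 0$ gives $\langle g_ku_k,\psi\rangle\to 0$, contradicting $\ge\delta$. Therefore $\|u_k\|_{\bes}\to 0$, and the claim — hence the theorem — follows.

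The genuine content, and the main obstacle, is the refined Sobolev inequality itself; everything after it is bookkeeping: the scaling identity, the verification that the test function lies in $(\sob)^*$, and the identification of the dilation-translation with an element of $G$. I would note that the Littlewood--Paley characterization of $\|\cdot\|_{\bes}$ can be replaced by the heat-semigroup one, $\|u\|_{\bes}\approx\sup_{t>0}t^{N/(2p^*)}\|e^{t\Delta}u\|_{L^\infty(\R^N)}$, and the same contradiction argument goes through verbatim. The common mechanism — extracting a scale $2^{j_k}$ and a centre $y_k$ so that after applying $g_k\in G$ the sequence pairs nontrivially with a fixed profile — is exactly the one behind the wavelet argument of Jaffard recalled in the introduction.
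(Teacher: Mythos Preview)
Your proof is correct and is essentially the paper's fourth (Littlewood--Paley) proof, closely related also to its third (wavelet) proof: both you and the paper cite the refined Sobolev inequality \eqref{bes-emb} and then reduce cocompactness to showing $\|u_k\|_{\bes}\to 0$ via the Littlewood--Paley characterization of that norm. The only variation is the endgame: the paper rescales to $v_k=g_{j_k,0}u_k$ and invokes the compact embedding $\dot H^{s_0,p}\hookrightarrow C(\R^N)$, $s_0>N/p$, to deduce $\|P_0v_k\|_\infty\to 0$, whereas you extract $j_k$ and $y_k$ simultaneously and test against the fixed profile $\psi=\varphi(-\cdot)\in\dot H^{-1,p'}=(\sob)^*$ --- a minor and equally valid alternative.
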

It could be possible also to furnish a proof based on atomic decomposition (for definition see \cite{FJW}), characterization of the norm in $B^{-N/p^*,\infty,\infty}$ in terms of the atomic coefficients, and the inequality \eqref{bes-emb}, but given that the atomic decomposition is derived from the Littelwood-Paley decomposition, such proof would be too repetitive of the fourth proof here. 

Throughout this paper we assume that  $N>p$ and $p\in(1,\infty)$, unless stated otherwise.
We will denote  an open ball of radius $r$ centered at $d$ as $B_r(x)$.
We will use the following notations for the conjugate exponent and the critical exponent, resoectively: $p^*=\frac{pN}{N-p}$, $p'=\frac{p}{p-1}$. The quasinorm of the Lorentz space $L^{p,q}(\R^N)$ (for definition see \cite{Triebel}) will be denoted as $\|u\|_{p,q}$. The norm of the usual Lebesgue space  $L^{p}(\R^N)$, identified with the Lorentz space $L^{p,p}(\R^N)$, will be denoted as  
$\|u\|_{p}$.

\section{Capacity-type argument}
The proof below is given by Sergio Solimini in \cite{Solimini}, Section 2. 
Cocompactness in $\sobemb$ is derived there from cocompactness in the embedding into the larger space, $\sob\hookrightarrow L^{p^*,\infty}(\R^N)$, whose equivalent norm, due to the original characterization of the space by Marcinkiewicz, is  
\[
\|u\|_{p^*,\infty}=\sup_{E\subset\R^N}|E|^{-1/p^{*'}}\int_E|u|.
\]
The proof needs the following auxilliary statement.
\begin{lemma}\label{S1}
Let $r>0$, and let
\[
\delta_r(u)(z)=\fint_{B_r(z)} \left(u(x)-u(z)\right)dx.
\]
There exist $C>0$ such that for every $r>0$  every $u\in \sob$, 

\begin{equation}\label{eq:S1}
\|\delta_r(u)\|_p\le C r\|\nabla u\|_p
\end{equation}
\end{lemma}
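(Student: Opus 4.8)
The plan is to prove \eqref{eq:S1} first for $u\in C_0^\infty(\R^N)$ and then to extend it to all of $\sob$ by density; the density step is legitimate precisely because \eqref{eq:S1} itself makes the map $u\mapsto\delta_r(u)$ Lipschitz from $\sob$ into $L^p$.

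For smooth $u$ I would represent the increment $u(x)-u(z)$ by the fundamental theorem of calculus along the segment from $z$ to $x$ and then substitute $y=x-z$, which carries $B_r(z)$ onto $B_r(0)$:
\[
\delta_r(u)(z)=\fint_{B_r(0)}\bigl(u(z+y)-u(z)\bigr)\,dy=\fint_{B_r(0)}\int_0^1\nabla u(z+ty)\cdot y\,dt\,dy .
\]
Since $|y|<r$ on $B_r(0)$, this yields the pointwise bound
\[
|\delta_r(u)(z)|\le r\fint_{B_r(0)}\int_0^1|\nabla u(z+ty)|\,dt\,dy .
\]
Now I would take the $L^p$ norm in $z$ and move it inside the integrations by Minkowski's integral inequality: the measure $\tfrac{1}{|B_r(0)|}\mathbf{1}_{B_r(0)}(y)\,dy\times dt$ on $B_r(0)\times(0,1)$ is a probability measure, and for each fixed $t,y$ the function $z\mapsto\nabla u(z+ty)$ is merely a translate of $\nabla u$, so $\|\nabla u(\cdot+ty)\|_p=\|\nabla u\|_p$. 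Hence
\[
\|\delta_r(u)\|_p\le r\fint_{B_r(0)}\int_0^1\|\nabla u(\cdot+ty)\|_p\,dt\,dy=r\|\nabla u\|_p ,
\]
which is \eqref{eq:S1}, in fact with $C=1$ (the translation invariance of the $L^p$ norm is what produces such a clean constant).

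To pass to a general $u\in\sob$, observe first that by the Sobolev embedding $\sobemb$ one has $u\in L^1_{\mathrm{loc}}(\R^N)$, so $\delta_r(u)(z)$ is well defined for a.e.\ $z$. Pick $u_n\in C_0^\infty(\R^N)$ with $u_n\to u$ in $\sob$; then $\nabla u_n\to\nabla u$ in $L^p$ and, again by $\sobemb$, $u_n\to u$ in $L^{p^*}$. By the case already proved, $(\delta_r(u_n))$ is Cauchy in $L^p$ and converges there to some $v$ with $\|v\|_p\le r\|\nabla u\|_p$; Hölder's inequality gives $\fint_{B_r(z)}u_n\to\fint_{B_r(z)}u$ for every $z$, and along a subsequence $u_n(z)\to u(z)$ for a.e.\ $z$, so $\delta_r(u_n)\to\delta_r(u)$ a.e., forcing $v=\delta_r(u)$. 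Thus \eqref{eq:S1} holds for all $u\in\sob$.

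The argument carries no real analytic difficulty — \eqref{eq:S1} is a Poincaré-type estimate — and the only points demanding care are the joint measurability and the interchange of the $L^p(dz)$ norm with the $dt\,dy$ integration (Fubini together with Minkowski's integral inequality), and, in the density step, the verification that the a.e.\ limit of $\delta_r(u_n)$ really is the function $\delta_r(u)$ rather than merely some other $L^p$ representative.
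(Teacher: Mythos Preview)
Your proof is correct. Both you and the paper start from the fundamental theorem of calculus along the segment from $z$ to $x$, but you diverge at the estimation step. The paper rewrites the line integral in polar coordinates to obtain the Riesz-potential-type pointwise bound
\[
\left|\int_{B_r(z)}(u(x)-u(z))\,dx\right|\le C\int_{B_r(z)}\frac{|\nabla u(x)|}{|x-z|^{N-1}}\,dx,
\]
and then applies Young's convolution inequality, using that $\|\,|x|^{-(N-1)}\mathbf{1}_{B_r}\|_1$ is a multiple of $r$. You instead keep the parametrization $(t,y)\in(0,1)\times B_r(0)$ and push the $L^p(dz)$ norm through by Minkowski's integral inequality together with translation invariance. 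Your route is slightly more elementary and yields the sharp constant $C=1$; the paper's route costs a dimensional constant but produces an intermediate pointwise convolution estimate, which is the kind of bound one would reuse if one wanted Lorentz-space or other endpoint refinements. The density step you spell out is more careful than the paper, which simply asserts that it suffices to treat $C_0^\infty$.
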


\begin{proof}
It suffices to prove (\ref{eq:S1}) for $u\in C_0^\infty(\R^N)$. Writing the function $u$ in polar coordinates $\rho,\omega$ centered at a point $z\in\R$, we have 
\[
u(\rho,\omega)-u(0)=\int_0^\rho \partial_t u(t,\omega)dt.
\]
Integrating with respect to $\omega$ over the unit sphere we easily arrive at
\begin{equation}
\left|\int_{B_r(z)} \left(u(x)-u(z)\right)dx\right|\le C\int_{B_r(z)}\frac{|\nabla u(x)|}{|x-z|^{N-1}}dx.
\end{equation}
Inequality (\ref{eq:S1}) follows then immediately by applying Young inequality for convolutions, once we note that the $L^1$-norm of the restriction of $1/|x|^{N-1}$ to $B_r(0)$ is a multiple of $r$. 
\end{proof}
We can now prove Theorem~\ref{thm1}.
\begin{proof}1. It suffices to prove cocompactness of the embedding $\sob\hookrightarrow L^{p^*,\infty}(\R^N)$.
Indeed, there is a continuous embedding $\sob\hookrightarrow L^{p^*,p}(\R^N)$ (expressed by the well-known Hardy inequality), and from application of H\"older inequality to the definition of Lorentz quasinorm one has
\begin{equation}
\|u\|_{p^*}^{p^*}\le C\|u\|_{p^*,p}^p\|u\|_{p^*,\infty}^{p^*-p}\le C\|\nabla u\|_p^p\|u\|_{p^*,\infty}^{p^*-p}. 
\end{equation} 
Thus any sequence bounded in $\sob$ and vanishing in $L^{p^*,\infty}(\R^N)$ will also vanish in $\leb$.
\par
2. Consider a bounded sequence $(v_k)\subset\sob$ which does not converge to zero in the norm of $L^{p,\infty}$. Passing to a renamed subsequence, we may assume that $\|v_k\|_{p,\infty}\ge 4\epsilon$ for some $\epsilon>0$. Then, by definition of the Marcinkiewicz norm, there exist a sequence of measurable sets $Y_k\subset\R^N$ such that 
\[
\int_{Y_k}|v_k|\ge 2\epsilon|Y_k|^\frac{1}{p^{*'}}. 
\] 
Let us choose integers $j_k$ so that the measure of the sets $X_k=2^{j_k}Y_k$ falls into the interval $[1,2^N]$ and let $u_k=g_{j_k,0}v_k$. Note that $\|u_k\|_{p,\infty}=\|v_k\|_{p,\infty}\ge 4\epsilon$, and 
\[
\int_{X_k}|u_k|\ge 2\epsilon. 
\] 
From Lemma~\ref{S1} it now follows that there exists a $r>0$ such that 
\[
\int_{X_k}|\delta_r(u_k)|\le \epsilon. 
\]
Comparing the last two relations and using the triangle inequality we have
\[
\int_{X_k}\left|\fint_{B_r(z)}u_kdx\right|dz\ge \epsilon, 
\]
and thus there exist points $z_k\in X_k$ such that 
\[
\left|\fint_{B_r(z_k)}u_kdx\right|\ge \epsilon 2^{-1-N}.  
\]
In other words,
\[
\int_{B_r(0)}u_k(\cdot-z_k)dx\not\to 0.
\] 
This implies that $u_k(x-z_k)$ does not converge weakly to zero, or, in terms of the original sequence,
$2^{\frac{N-p}{p}j_k}v_k(2^{j_k}(\cdot-y_k))$ does not converge weakly to zero. This contradiction proves that the embedding of $\sob$ into $L^{p^*,\infty}$ is cocompact, and, subsequebntly, the embedding $\sobemb$ is cocompact.
\end{proof}
\section{Proof by partition of domain and range}
Cocompactness of the embedding $\dot H^{1,2}(\R^N)\hookrightarrow L^{2^*}(\R^N)$ for $N>2$ is also proved by Lemma~5.10 in \cite{ccbook}, and its argument can be easily modified for general $p\in(1,N)$ giving another proof of Theorem~\ref{thm1}. 
\begin{proof} Let 
	We may assume without loss of generality that $u_k\in C_0^\infty(\R^N)$.
Let $(u_k)\subset\sob$ and assume that for any $(j_k)\subset\Z$ and any $(y_k)\subset\R^N$, $g_{j_k,y_k}u_\rightharpoonup 0$. Let
$\chi\in C_0^\infty((\frac12,4),[0,3])$, such that
$|\chi^\prime|\le 2$ for all $t$ and  $\chi(t)=t$ for $t\in[1,2]$.
By continuity of the embedding $\sobemb$, we have for every $y\in\Z^N$,
\begin{equation*}
\left(\int_{(0,1)^N+y}\chi(|u_k|)^{p^*}\right)^{p/p^*}\le
C\int_{(0,1)^N+y}(|\nabla u_k|^p+\chi( |u_k|)^p),
\end{equation*}
from which follows, if we take into account that $\chi(t)^{p^*}\le
C t^p$ for $t\ge 0$,
\begin{eqnarray*}
&&\int_{(0,1)^N+y}\chi(|u_k|)^{p^*}\\
&\le &
C\int_{(0,1)^N+y}(|\nabla u_k|^p+\chi(u_k)^p)
\left(\int_{(0,1)^N+y}\chi(|u_k|)^{p^*}\right)^{1-p/p^*}
\\&\le&
C\int_{(0,1)^N+y}(|\nabla
u_k|^p+\chi(|u_k|)^p)\left(\int_{(0,1)^N+y}|u_k|^p\right)^{1-p/p^*}.
\end{eqnarray*}
Adding the above inequalities over $y\in\Z^N$ and taking into
account that $\chi(t)^p\le Ct^{p^*}$ for $t\ge 0$, so that 
\begin{equation*}
\int_{\R^N}\chi(|u_k|)^p\le C\left(\int_{\R^N}|\nabla
u_k|^p\right)^{p^*/p}\le C,
\end{equation*}
we get
\begin{equation}
\label{band1} \int_{\R^N}\chi(|u_k|)^{p^*}\le
C\sup_{y\in\Z^N}\left(\int_{(0,1)^N+y}|u_k|^p\right)^{1-p/p^*}.
\end{equation}
Let $y_k\in\Z^N$ be such that
\begin{equation*}
\sup_{y\in\Z^N}\left(\int_{(0,1)^N+y}|u_k|^p\right)^{1-p/p^*}\le 2
\left(\int_{(0,1)^N+y_k}|u_k|^p\right)^{1-p/p^*}.
\end{equation*}
Since $u_k(\cdot-y_k)\rightharpoonup 0$ in $\sob$ and by the local compactness of subcritical Sobolev embeddings,
\begin{equation*}
\int_{(0,1)^N+y_k}|u_k|^p=\int_{(0,1)^N}|u_k(\cdot-y_k)|^p\to 0.
\end{equation*}
Substituting this into (\ref{band1}), we get
\begin{equation*}
\int_{\R^N}\chi(|u_k|)^{p^*}\to 0.
\end{equation*}
Let
\begin{equation*}
\chi_j(t)=2^j\chi(2^{-j}t)),\, j\in\Z.
\end{equation*}
Note that we may substitute for the original sequence $u_k$ a sequence
$g_{j_k,0}u_k0$, with arbitrary $j_k\in\Z$, and so we have
\begin{equation}
\label{band3} \int_{\R^N}\chi_{j_k}(|u_k|)^{p^*}\to 0.
\end{equation}
Note now that, with $j\in\Z$,
\begin{equation*}
\left(\int_{\R^N}\chi_{j}(|u_k|)^{p^*}\right)^{p/p^*}\le
C\int_{2^{j-1}\le|u_k|\le 2^{j+2}} |\nabla u_k|^p,
\end{equation*}
which can be rewritten as
\begin{equation}
\label{bands2} \int_{\R^N}\chi_{j}(|u_k|)^{p^*}\le
C\int_{2^{j-1}\le|u_k|\le 2^{j+2}} |\nabla
u_k|^p\left(\int_{\R^N}\chi_{j}(|u_k|)^{p^*}\right)^{1-\frac{p}{p^*}}.
\end{equation}
Adding the inequalities (\ref{bands2}) over $j\in\Z$ and taking
into account that the sets $2^{j-1}\le|u_k|\le 2^{j+2}$ cover $\R^N$
with a uniformly finite multiplicity, we obtain
\begin{equation}
\label{bands3} \int_{\R^N}|u_k|^{p^*}\le C\int_{\R^N} |\nabla
u_k|^p\sup_{j\in\Z}\left(\int_{\R^N}\chi_{j}(|u_k|)^{p^*}\right)^{1-p/p^*}.
\end{equation}
Let $j_k$ be such that
\begin{equation*}
\sup_{j\in\Z}\left(\int_{\R^N}\chi_{j}(|u_k|)^{p^*}\right)^{1-p/p^*}\le
2\left(\int_{\R^N}\chi_{j_k}(|u_k|)^{p^*}\right)^{1-p/p^*},
\end{equation*}
and note that the right hand side converges to zero due to
(\ref{band3}). Then from (\ref{bands3}) follows that $u_k\to 0$ in
$L^{p^*}$, which yields the cocompactness.
\end{proof}
\section{Proof by the wavelet decomposition}
We give here the proof of cocompactness in Sobolev embeddings by Stephane Jaffard, \cite{Jaffard}. Note that Jaffard's result covers spaces $\dot H^{s,p}$ for general $s$, and that it has been further extended in  \cite{BCK} to a greater range of embeddings involving Besov and Triebel-Lizorkin spaces, which also admit an unconditional wavelet basis of rescalings of a mother wavelet.
This proof is the shortest in this survey, because the hard analytic part if the argument relies on the wavelet analysis. Specifically, we use the following results (see \cite{Daubechies, Meyer} for details)
There exists a function $\psi:\R^N\to\R$, called {\em mother wavelet}, such that the family $\{2^{\frac{N-p}{p}j}\psi(2^j-k);\,j\in\Z, k\in\Z^N\}\subset \{g\psi, g\in G_\frac{N-p}{p}\}$ forms an unconditional basis in $\sob$ as well as (among others) in the Besov space $\dot B^{-N/p^*,\infty,\infty}$. Moreover, if  $c_{j,k}[u]$ are coefficients in the expansion of $u$ in this basis, one has the following equivalent norm:
\begin{equation}\label{Bes}
\|u\|_{\dot B^{-N/p^*,\infty,\infty}}=\sup_{j\in\Z,k\in\Z^N}|c_{j,k}[u]|.
\end{equation}
We can now prove Theorem~\ref{thm1}.
\begin{proof}
The starting point of the proof is the following inequality (see \cite{GMO}):
\begin{equation}\label{bes-emb}
\|u\|_{p^*}\le C\|u\|_{\sob}^{p/p^*}\|u\|_{\dot B^{-N/p^*,\infty,\infty}}^{1-p/p^*}.
\end{equation}
Assume that for any $(j_n)\subset\Z$ and any $(y_n)\subset\R^N$, $g_{j_n,y_n}u_n \rightharpoonup 0$ in $\sob$.
Since $c_{j,k}[u]$ are continuous functionals on $\sob$ and since for any $(j,k),(j',k')\in\Z\times\Z^{N})$ there exists
a $g\in G$ such that $c_{k,j}[gu]=c_{k',j'}[u]$, it follows that for any $(j_n,k_n)\in\Z\times\Z^N$, $c_{j,k}[u_n] \to 0$ as $n\to\infty$, and, consequently, using \eqref{Bes}, we have
 \begin{equation}\label{b3}
\|u_n\|_{\dot B^{-N/p^*,\infty,\infty}}= \sup_{j\in\Z,k\in\Z^N}|c_{j,k}[u_n]| \to 0.  
 \end{equation}
Then, by \eqref{bes-emb}, $u_n\to 0$ in $L^{p^*}$.
\end{proof}
\section{Proof via the Paley-Littlewood decomposition}
The proof below is a modification of the argument given in \cite{KiVi},Chapter 4: we replaced their intermediate step, inequality (4.16) of Proposition~4.8 in \cite{KiVi}, with a related inequality \eqref{bes-emb} which we already used above. The proof is generalized from the case $p=2$ to the case $p\in(1,N)$.

For details concerning the Littlewood-Paley theory we refer reader the to  Chapter 5 of \cite{Triebel} or Chapter 5 of \cite{FJW}. 
The Paley-Littlewood family of operators $\{P_j\}_{j\in\Z}$ is based on existence of a family of functions $\{\varphi_{j}\}_{j\in\Z}$
with the following properties:
\begin{equation}
\mathrm{supp}\,\{\varphi_{j}\subset\left\{ \xi\in\mathbb{R}^{N},2^{j-1}\le\left|\xi\right|\le2^{j+1}\right\} \label{eq:ayz-1}
\end{equation}
\begin{equation}
\sum_{j\in\Z}\varphi_{n}(\xi)=1\mbox{ for all }\xi\in\mathbb{R}^{N}\setminus\left\{ 0\right\} .\label{eq:byz-1}
\end{equation}
\begin{equation}
\varphi_{j}(\xi)=\varphi_{0}(2^{-j}\xi)\mbox{ for all }\xi\in\mathbb{R}^{N}\mbox{ and }j\in\mathbb{Z}.\label{eq:ccyz-1}
\end{equation}
\begin{equation}
\varphi_{j-1}(\xi)+\varphi_{j}(\xi)+\varphi_{j+1}(\xi)=1\mbox{ for all }\xi\in\mathrm{supp}\varphi_{j}\,.\label{eq:fslz-1}
\end{equation} 
Then $P_j:\sob\to \R$, $j\in \Z$, are given by
$$
P_ju= F^{-1}\varphi_{0}(2^{-j}\cdot)Fu,
$$
where $F$ is the Fourier transform, say, normalized as a
unitary operator in $L^{2}(\mathbb{R}^{N})$. 
In what follows, we will use the following equivalent norm of $\bes$ (cf (4) in Chapter 5 of \cite{Triebel} or (5.2) in \cite{FJW}): 
\begin{equation}\label{Bes2}
\|u\|_{\dot B^{-N/p^*,\infty,\infty}}=\sup_{j\in\Z}\|2^{-\frac{N}{p^*}j}P_{j}u\|_\infty.
\end{equation}
We can now prove Theorem~\ref{thm1}.
\begin{proof}Assume that for any $(j_n)\subset\Z$ and any $(y_n)\subset\R^N$, $g_{j_n,y_n}u_n\rightharpoonup 0$ in $\sob$. By \eqref{bes-emb} it suffices to show that $u_k\to 0$ in $\bes$. By  \eqref{Bes2}, it suffices then to show that 
\begin{equation}
\label{a1}
\sup_{j\in\Z}\|2^{-\frac{N}{p^*}j}P_{j}u_k\|_\infty\to 0.
\end{equation}
or, equivalently, that for any sequence $(j_k)\subset\Z$,
\begin{equation}
\label{a2}
\|2^{-\frac{N}{p^*}j_k}P_{j_k}u_k\|_\infty\to 0\to 0,
\end{equation}
or, equivalently, setting $v_k=g_{j_k,0}u_k$ and noting that  $v_k(\cdot-y_k)\rightharpoonup 0$ for any sequence $(y_k)\subset\R^N$,
\begin{equation}
\label{a3}
\|P_{0}v_k\|_\infty\to 0.
\end{equation}
Choose any $s_0>N/p$, so that  $\dot H^{s_0,p}(\R^N)$ is compactly embedded into $C(\R^N)$. Since $P_0v_k$ is bounded in $\dot H^{s_0,p}(\R^N)$, we have 
$v_k(\cdot-y_k)\rightharpoonup 0$  in $\dot H^{s_0,p}(\R^N)$ for any sequence $(y_k)\subset\R^N$, and \eqref{a3} follows by compactness of the embedding into $C(\R^N)$. This implies \eqref{a1}, and by \eqref{Bes2} $u_k\to 0$ in $\leb$. 
\end{proof}

\bibliographystyle{amsplain}

\begin{thebibliography}{99}
\bibitem{AT-BV} Adimurthi, C. Tintarev,  Defect of compactness in spaces of bounded variation, preprint (arXiv:1408.4583).
\bibitem{BCK} H. Bahouri, A. Cohen, G. Koch, A general wavelet-based profile decomposition in the critical
embedding of function spaces, Confluentes Matematicae 3 (2011), 387-411.
\bibitem{Cwikel} M. Cwikel, Opial's condition and cocompactness for Besov and Triebel-Lizorkin spaces, in preparation.
\bibitem{Daubechies} I. Daubechies, Ten Lectures on Wavelets, SIAM, 1992.
\bibitem{vanDulst} D. van Dulst, Equivalent norms and the fixed point property for nonexpansive mappings, J.
London Math. Soc. (2) 25 (1982), 139-144.
\bibitem{FJW} M. Frazier, B. Jawerth, G. Weiss, Littlewood-Paley theory and the study of function spaces, CBMS Regional Conference Series on Mathematics 79, Providence RI, 1991.
\bibitem{GMO} P. G\'erard, Y. Meyer, and F. Oru, In\'egalit\'es de Sobolev precis\'ees, in: S\'eminaire EDP
1996--1997, Expos\'e 4, Ecole Polytechnique, 1996.
\bibitem{Jaffard} S. Jaffard, Analysis of the lack of compactness in the critical Sobolev embeddings, J. Funct. Analysis. 161 (1999), 384-396.
\bibitem{KiVi} R.  Killip, M. Visan,  Nonlinear Schr\"odinger equations at critical regularity. Evolution equations, 325-437, Clay Math. Proc. 17, Amer. Math. Soc., Providence, RI, 2013.
\bibitem{Lieb} Lieb, E., On the lowest eigenvalue of the Laplacian for the intersection of two domains. Invent. Math. 74, 441-448 (1983) 
\bibitem{Lim} Teck Cheong Lim, Remarks on some fixed point theorems, Proc. Amer. Math. Soc. 60 (1976), 179–182.
\bibitem{Meyer} Y. Meyer, Ondelettes et Op\'erateurs, Hermann, 1990.
\bibitem{SchinTin} I. Schindler and K. Tintarev, An abstract version of the concentration compactness principle,
Revista Mat. Complutense, 15 (2002), 1-20.
\bibitem{Solimini} S. Solimini, A note on compactness-type properties with respect to Lorentz norms of bounded subsets of a Sobolev space, Ann. Inst. H. Poincar\'e Anal. Non Lin\'eaire 12 (1995),319-337. 
\bibitem{SoliTi} S.Solimini, C.Tintarev, Concentration analysis in Banach spaces, Comm. Contemp. Math., to appear (available online).
\bibitem{ccbook} K. Tintarev, K.-H. Fieseler, Concentration compactness: functional-analytic grounds and applications, Imperial College Press 2007.
\bibitem{survey} C. Tintarev, Concentration analysis and compactness, in: Adimuri, K. Sandeep, I. Schindler, C. Tintarev, editors, Concentration Analysis and Applications to PDE ICTS Workshop, Bangalore, January 2012, ISBN 978-3-0348-0372-4, Birkh\"auser, Trends in Mathematics (2013), 117-141.
\bibitem{Triebel} H. Triebel, Theory of function spaces, Birkh\"auser, 1983.
\end{thebibliography}

\end{document}